\newtheorem{theorem}{Theorem}[section]
\newtheorem{prop}[theorem]{Proposition}
\newtheorem{lemma}[theorem]{Lemma}
\newtheorem{claim}[theorem]{Claim}
\theoremstyle{definition}
\newtheorem{defn}[theorem]{Definition}
\newtheorem*{defn-non}{Definition}
\newlist{Case}{enumerate}{3}
\setlist[Case, 1]{%
    label           =   {\bfseries Case \arabic*.},
    labelindent=1em ,labelwidth=1cm, labelsep*=1em, leftmargin =!
}
\setlist[Case, 2]{%
    label           =   {\bfseries Subcase \arabic{Casei}.\arabic*.},
    labelindent=-1em ,labelwidth=1cm, labelsep*=1em, leftmargin =!
}
\setlist[Case, 3]{%
    label           =   {\bfseries Subsubcase \arabic{Casei}.\arabic{Caseii}.\arabic*.},
    labelindent=-1em ,labelwidth=1cm, labelsep*=1em, leftmargin =!
}
\newenvironment{poc}{\begin{proof}[Proof of claim]}{\end{proof}}
\newcommand{\vv}{\boldsymbol{v}}
\newcommand{\TT}{\boldsymbol{T}}
\newcommand{\RR}{\boldsymbol{R}}
\newcommand*{\abs}[1]{\lvert#1\rvert}
\title{The Frankl-Pach upper bound is not tight for any uniformity}
\author{
Gennian Ge\thanks{School of Mathematical Sciences, Capital Normal University, Beijing, China. Emails: gnge@zju.edu.cn, 3535935416@qq.com. Gennian Ge was supported by the National Key Research and Development Program of China under Grant 2020YFA0712100, the National Natural Science Foundation of China under Grant 12231014, and Beijing Scholars Program.}
\and 
Zixiang Xu\thanks{Extremal Combinatorics and Probability Group (ECOPRO), Institute for Basic Science (IBS), Daejeon, South Korea. Email: zixiangxu@ibs.re.kr. Supported by IBS-R029-C4.}
\and 
Chi Hoi Yip\thanks{School of Mathematics, Georgia Institute of Technology, Atlanta, United States. Email: cyip30@gatech.edu}
\and
Shengtong Zhang\thanks{Department of Mathematics, Stanford University, USA. E-mail address: stzh1555@stanford.edu}
\and
Xiaochen Zhao\footnotemark[1]
}
\begin{document}
\maketitle

\begin{abstract}
For any positive integers $n\ge d+1\ge 3$, what is the maximum size of a $(d+1)$-uniform set system in $[n]$ with VC-dimension at most $d$? In 1984, Frankl and Pach initiated the study of this fundamental problem and provided an upper bound $\binom{n}{d}$ via an elegant algebraic proof. Surprisingly, in 2007, Mubayi and Zhao showed that when $n$ is sufficiently large and $d$ is a prime power, the Frankl-Pach upper bound is not tight. They also remarked that their method requires $d$ to be a prime power, and asked for new ideas to improve the Frankl-Pach upper bound without extra assumptions on $n$ and $d$.  

In this paper, we provide an improvement for any $d\ge 2$ and $n\ge 2d+2$, which demonstrates that the long-standing Frankl-Pach upper bound $\binom{n}{d}$ is not tight for any uniformity. Our proof combines a simple yet powerful polynomial method and structural analysis.
\end{abstract}

\section{Introduction}
The \emph{Vapnik-Chervonenkis dimension} (\emph{VC-dimension}) is a cornerstone concept in statistical learning and machine learning. It quantifies the capacity of a set of functions and assesses a model's potential to generalize beyond its training data. Widely applied in computational learning theory, artificial intelligence, and statistical inference, the VC-dimension plays a pivotal role in analyzing classification algorithms and evaluating the performance of machine learning models. Its formal definition is as follows.

\begin{defn}\label{def:VCdim}
    Let $\mathcal{F} \subseteq 2^V$ be a set system on the ground set $V$. The VC-dimension of $\mathcal{F}$ is the largest size of subset $S\subseteq V$ such that for every subset $S'\subseteq S$, there exists a member $F_{S'}\in\mathcal{F}$ such that $F_{S'}\cap S=S'$.
    \end{defn}
A natural question that arises is: What is the maximum size of a set system with a given VC-dimension? This fundamental question in the theory of VC-dimension was independently resolved in the 1970s by three different groups~\cite{1972JCTASauer,1972PACJMShelah,1971TPAVCVC}.

\begin{lemma}\label{lem:SSLemma}
    Let $\mathcal{F}\subseteq 2^{V}$ be a set system with VC-dimension at most $d$, then
$|\mathcal{F}|\le\sum\limits_{i=0}^{d}\binom{|V|}{i}$.
\end{lemma}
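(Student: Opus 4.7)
The plan is to prove the stronger statement (Pajor's inequality) that
\[
|\mathcal{F}|\le \bigl|\{S\subseteq V : S\text{ is shattered by }\mathcal{F}\}\bigr|,
\]
where ``shattered'' is meant in the sense of Definition~\ref{def:VCdim}. Once this is established, the lemma follows immediately: since $\mathcal{F}$ has VC-dimension at most $d$, every shattered set has size at most $d$, so the right-hand side is at most $\sum_{i=0}^{d}\binom{|V|}{i}$.

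I would prove Pajor's inequality by induction on $n=|V|$, the base case $n=0$ being trivial ($\mathcal{F}$ is either empty or equals $\{\emptyset\}$, and $\emptyset$ is shattered by $\{\emptyset\}$). For the inductive step, fix an element $v\in V$ and split
\[
\mathcal{F}_0=\{F\in\mathcal{F} : v\notin F\},\qquad \mathcal{F}_1=\{F\setminus\{v\} : F\in\mathcal{F},\ v\in F\},
\]
which are two set systems on the smaller ground set $V\setminus\{v\}$. The inductive hypothesis applied to each gives $|\mathcal{F}_i|\le|\mathrm{sh}(\mathcal{F}_i)|$ for $i=0,1$, where $\mathrm{sh}(\cdot)$ denotes the collection of shattered sets.

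The heart of the argument is the inequality
\[
|\mathrm{sh}(\mathcal{F}_0)|+|\mathrm{sh}(\mathcal{F}_1)|\le|\mathrm{sh}(\mathcal{F})|.
\]
To see this, I would exhibit an injection from the disjoint union into $\mathrm{sh}(\mathcal{F})$: any $S\subseteq V\setminus\{v\}$ shattered by $\mathcal{F}_0$ or by $\mathcal{F}_1$ is automatically shattered by $\mathcal{F}\supseteq \mathcal{F}_0\cup\mathcal{F}_1'$; and if $S$ lies in \emph{both} $\mathrm{sh}(\mathcal{F}_0)$ and $\mathrm{sh}(\mathcal{F}_1)$, then $S\cup\{v\}$ is shattered by $\mathcal{F}$, since for any $T\subseteq S\cup\{v\}$ we may take a witness avoiding $v$ from $\mathcal{F}_0$ when $v\notin T$ and a witness containing $v$ from the preimage of $\mathcal{F}_1$ when $v\in T$. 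Mapping each $S$ in the symmetric difference to $S$ itself, and each $S$ in the intersection to the pair $\{S,S\cup\{v\}\}$, produces the required injection since the $v$-containing images are disjoint from the $v$-avoiding ones.

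Combining everything yields
\[
|\mathcal{F}|=|\mathcal{F}_0|+|\mathcal{F}_1|\le|\mathrm{sh}(\mathcal{F}_0)|+|\mathrm{sh}(\mathcal{F}_1)|\le|\mathrm{sh}(\mathcal{F})|,
\]
completing the induction. The main obstacle is the bookkeeping for the key shattering inequality: one must verify carefully that the construction $S\mapsto S\cup\{v\}$ lands in $\mathrm{sh}(\mathcal{F})$ (using witnesses from both $\mathcal{F}_0$ and the lift of $\mathcal{F}_1$) and that no double counting occurs between the $v$-avoiding and $v$-containing shattered sets produced.
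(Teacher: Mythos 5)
The paper does not give a proof of Lemma~\ref{lem:SSLemma}; it only cites the original references of Sauer, Shelah, and Vapnik--Chervonenkis, so there is nothing in the paper to compare your argument against. That said, your proof is correct and complete. You prove the stronger Pajor inequality $|\mathcal{F}|\le|\mathrm{sh}(\mathcal{F})|$ by induction on $|V|$, splitting on an element $v$ into the trace $\mathcal{F}_0$ (sets avoiding $v$) and the link $\mathcal{F}_1$ (sets containing $v$, with $v$ removed), both on the ground set $V\setminus\{v\}$ with $|\mathcal{F}|=|\mathcal{F}_0|+|\mathcal{F}_1|$. The key step, $|\mathrm{sh}(\mathcal{F}_0)|+|\mathrm{sh}(\mathcal{F}_1)|\le|\mathrm{sh}(\mathcal{F})|$, is verified correctly: sets shattered by $\mathcal{F}_0$ or by $\mathcal{F}_1$ are shattered by $\mathcal{F}$ (the latter because a witness $G\in\mathcal{F}_1$ lifts to $G\cup\{v\}\in\mathcal{F}$ and $v\notin S$), and a set $S$ shattered by both yields $S\cup\{v\}\in\mathrm{sh}(\mathcal{F})$ by taking witnesses from $\mathcal{F}_0$ when $v\notin T$ and lifted witnesses from $\mathcal{F}_1$ when $v\in T$; the injection is well-defined because the images with $v$ and without $v$ cannot collide. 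The lemma then follows since under $\mathrm{VC}(\mathcal{F})\le d$ every shattered set has size at most $d$. This is a clean, self-contained route that in fact gives the sharper statement $|\mathcal{F}|\le|\mathrm{sh}(\mathcal{F})|$, which is stronger than the stated bound $\sum_{i=0}^d\binom{|V|}{i}$; the alternative classical routes (shifting/compression toward down-sets, or the polynomial/linear-algebra argument) give only the cardinality bound and would be less informative here.
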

Furthermore, the upper bound in~\cref{lem:SSLemma} is sharp, as shown by considering a Hamming ball with radius $d$. \cref{lem:SSLemma} also stands as an important result in extremal combinatorics. In particular, the connection between VC-dimension and extremal combinatorics has sparked significant excitement in recent years; see for example~\cite{2024BVCSunflower,2019DCGBVCEH,2021ErdosSchur,2023BVCSunflower,2024GraphToGeom,2023BVCErdosHajnal,2023SukMatchingLemma}.

While~\cref{lem:SSLemma} provides a tight upper bound for non-uniform set systems with a given VC-dimension, it is natural to investigate the maximum size of a uniform set system with small VC-dimension. Indeed, in the 1980s, Frankl and Pach~\cite{1984Franklpach} initiated the study of this fundamental problem and provided a clean upper bound with an elegant proof.

\begin{theorem}[\cite{1984Franklpach}]\label{thm:FranklPach}
   Let $n,d$ be positive integers with $n\ge d+1$. If $\mathcal{F}\subseteq\binom{[n]}{d+1}$ is a set system with VC-dimension at most $d$, then $|\mathcal{F}|\le\binom{n}{d}$.
\end{theorem}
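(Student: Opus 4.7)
The plan is to establish the Frankl-Pach bound by a polynomial-method argument carried out in the ring $R$ of multilinear polynomials in $x_1,\dots,x_n$ over $\mathbb{R}$, evaluated on the characteristic vectors $v_G\in\{0,1\}^n$ of subsets $G\subseteq[n]$. For each $U\subseteq[n]$ write $e_U=\prod_{i\in U}x_i$, and write $f|_{\mathcal{F}}$ for the vector $(f(v_G))_{G\in\mathcal{F}}\in\mathbb{R}^{\mathcal{F}}$. The target is to show two complementary facts: (i) the vectors $\{e_F|_{\mathcal{F}}:F\in\mathcal{F}\}$ are linearly independent, and (ii) each $e_F|_{\mathcal{F}}$ lies in the span of the $\binom{n}{d}$ vectors $\{e_U|_{\mathcal{F}}:U\in\binom{[n]}{d}\}$. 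Fact (i) is immediate, since $e_F(v_G)=\mathbbm{1}[F\subseteq G]=\mathbbm{1}[F=G]$ whenever $F,G$ are both $(d+1)$-sets, so the relevant $|\mathcal{F}|\times|\mathcal{F}|$ matrix is the identity. Combining (i) and (ii) forces $|\mathcal{F}|\le\binom{n}{d}$.

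For fact (ii), I would proceed in two stages. The first stage invokes the VC-dimension hypothesis: for each $F\in\mathcal{F}$, the set $F$ is not shattered, and since $F\cap F=F$ is always achieved, there exists $T_F\subsetneq F$ with $|T_F|\le d$ such that no $G\in\mathcal{F}$ satisfies $G\cap F=T_F$. Then the polynomial $P_F(x)=\prod_{i\in T_F}x_i\prod_{j\in F\setminus T_F}(x_j-1)$ vanishes identically on $\{v_G:G\in\mathcal{F}\}$, because $P_F(v_G)\neq 0$ would force $G\cap F=T_F$. Expanding and reducing modulo $x_i^2-x_i$ gives $P_F=\sum_{T_F\subseteq U\subseteq F}(-1)^{|F|-|U|}e_U$, in which the top monomial $e_F$ has coefficient $1$. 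Since $P_F|_{\mathcal{F}}=0$, this expresses $e_F|_{\mathcal{F}}$ as a linear combination of the vectors $\{e_U|_{\mathcal{F}}:T_F\subseteq U\subsetneq F\}$, all of which satisfy $|U|\le d$.

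The second stage uses the uniformity $|G|=d+1$ for every $G\in\mathcal{F}$ to promote every monomial of size strictly less than $d$ into the span of monomials of larger size. Since $\sum_{j=1}^n x_j$ evaluates to $d+1$ on every $v_G\in\mathcal{F}$, multiplying $e_U$ by $\sum_j x_j$ and applying $x_j^2=x_j$ gives, as vectors in $\mathbb{R}^{\mathcal{F}}$, the identity $\sum_{j\notin U}e_{U\cup\{j\}}|_{\mathcal{F}}=(d+1-|U|)\,e_U|_{\mathcal{F}}$; whenever $|U|<d+1$ this solves for $e_U|_{\mathcal{F}}$ in terms of size-$(|U|+1)$ monomials, so iterating pushes every $e_U|_{\mathcal{F}}$ with $|U|\le d$ into the span of $\{e_U|_{\mathcal{F}}:|U|=d\}$. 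The only real obstacle I anticipate is a short bookkeeping check that the two stages compose without circularity: stage one introduces only monomials $e_U$ with $|U|\le d$ (strictly below $d+1$), and stage two strictly raises size while staying at $\le d$, so the chain of spans closes at size exactly $d$, yielding the desired bound $|\mathcal{F}|\le\binom{n}{d}$.
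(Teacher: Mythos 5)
Your proof is correct, and it uses the same two core ingredients as the paper's argument: the multilinear polynomial $\prod_{i\in T_F}x_i\prod_{j\in F\setminus T_F}(x_j-1)$ that vanishes on $\mathcal{F}$ because $F$ is not shattered, and the identity $\sum_j x_j=d+1$ on characteristic vectors of $(d+1)$-sets. The difference is one of framing, and it is real enough to note. You work in the evaluation space $\mathbb{R}^{\mathcal{F}}$ and show that the $|\mathcal{F}|$ linearly independent vectors $e_F|_{\mathcal{F}}$ live inside the span of the $\binom{n}{d}$ vectors $\{e_U|_{\mathcal{F}}:|U|=d\}$, a ``few vectors span everything'' argument. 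The paper works in the polynomial space $V_{n,d}$ of degree-$\le d$ multilinear polynomials and exhibits a large linearly independent family there: it packages your $P_F$ as $f_{F_i}=P_{F_i}-e_{F_i}$ to drop the degree, packages your stage-two relation as the polynomials $h_H=(\sum_j x_j-d-1)\prod_{j\in H}x_j$ for $|H|\le d-1$, and then \emph{adds} the monomials $g_G=e_G$ for $G\in\binom{[n]}{d}\setminus\partial_d\mathcal{F}$ so that dimension counting directly gives the stronger shadow inequality $|\mathcal{F}|\le|\partial_d\mathcal{F}|$, which is what the rest of the paper needs. Your formulation is a clean dual of this, closer in spirit to Frankl and Pach's original inclusion-matrix proof; it yields $\binom{n}{d}$ immediately, and with the small extra observation that $e_W|_{\mathcal{F}}=0$ whenever $W\notin\partial_d\mathcal{F}$, it too recovers the shadow bound. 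The ``no circularity'' check you flag at the end is indeed fine: stage one lands strictly below degree $d+1$, and stage two is a strictly size-increasing recursion terminating at $d$.
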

Frankl and Pach~\cite{1984Franklpach} as well Erd\H{o}s~\cite{1984Erdos} further conjectured that for sufficiently large $n$, the maximum value is $\binom{n-1}{d}$ achieved when $\mathcal{F}$ forms a star. However, two unexpected results emerged over the past four decades:
   \begin{enumerate}
       \item[\textup{(1)}] In 1997, Ahlswede and Khachatrian~\cite{1997CombFan} disproved the above conjecture by constructing such a set system of size $\binom{n-1}{d}+\binom{n-4}{d-2}$ when $n\ge 2(d+1)$.
       \item[\textup{(2)}] In 2007, Mubayi and Zhao~\cite{2007JAC} cleverly combined the high-order inclusion matrix and the sunflower lemma to show that, when $d=p^{t}$ for some prime $p$ and positive integer $t$ and $n$ is sufficiently large relative to $d$, the size of such a set system is at most $\binom{n}{d}-\log_{p}{n}+C_{d}$, where $C_{d}$ is related to the sunflower lemma~\cite{2021Annals,1960Sunflower}. They also remarked~\cite[Proposition 10]{2007JAC} that their method requires $d$ to be a prime power, and claimed that improving the Frankl-Pach upper bound for other values of $d$ will most likely require some new ideas.
   \end{enumerate}

In this paper, we show that the upper bound in~\cref{thm:FranklPach} can be improved for all positive integers $n,d$ with $d\ge 2$ and $n\geq 2d+2$. 
\begin{theorem}\label{thm:Main}
    Let $n,d$ be positive integers with $d\ge 2$ and $n\ge 2d+ 2$. If $\mathcal{F}\subseteq\binom{[n]}{d+1}$ is a set system with VC-dimension at most $d$, then $|\mathcal{F}|<\binom{n}{d}$. 
\end{theorem}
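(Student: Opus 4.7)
We argue by contradiction: suppose $\mathcal{F} \subseteq \binom{[n]}{d+1}$ has VC-dimension at most $d$ and $|\mathcal{F}| = \binom{n}{d}$. To each $F \in \mathcal{F}$ we attach the inclusion vector $v_F \in \mathbb{R}^{\binom{[n]}{d}}$ defined by $(v_F)_S = \mathbbm{1}[S \subseteq F]$, or equivalently the multilinear polynomial $p_F(x) = \sum_{S \in \binom{F}{d}} \prod_{i \in S} x_i$, which lies in the $\binom{n}{d}$-dimensional space $V$ of homogeneous degree-$d$ multilinear polynomials. The Frankl--Pach polynomial-method argument uses the VC hypothesis to show that $\{v_F\}_{F \in \mathcal{F}}$ is linearly independent in $\mathbb{R}^{\binom{[n]}{d}}$; our equality assumption therefore makes $\{v_F\}_{F \in \mathcal{F}}$ a basis.

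Since $n \geq 2d+2$ gives $\binom{n}{d+1} > \binom{n}{d}$, there exists $F_0 \in \binom{[n]}{d+1} \setminus \mathcal{F}$. Expanding $v_{F_0} = \sum_{F \in \mathcal{F}} c_F v_F$ in the basis and matching the $S$-coordinate for each $S \in \binom{[n]}{d}$ gives
\[
\sum_{F \in \mathcal{F} \colon F \supseteq S} c_F \; = \; \mathbbm{1}[S \subseteq F_0].
\]
In particular, for each $d$-subset $S_k = F_0 \setminus \{i_k\}$ of $F_0$, this reads $\sum_{j \in [n]\setminus F_0 \colon S_k \cup \{j\} \in \mathcal{F}} c_{S_k \cup \{j\}} = 1$, giving a system of $d+1$ ``swap-neighbor'' identities; for all other $d$-subsets $S$ the corresponding sum is $0$. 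Equivalently, $e_{F_0} - \sum_F c_F e_F$ lies in the kernel of the $(d+1,d)$-inclusion map $W : \mathbb{R}^{\binom{[n]}{d+1}} \to \mathbb{R}^{\binom{[n]}{d}}$, which by Wilson's rank theorem has dimension $\binom{n}{d+1} - \binom{n}{d}$ for $n \geq 2d+1$.

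The remaining step is a structural analysis: converting this rigid algebraic system into a combinatorial contradiction with the VC hypothesis. Since $n \geq 2d+2$, the complement $[n] \setminus F_0$ contains at least $d+1$ elements, giving us enough room to form an auxiliary $(d+1)$-set $T \subseteq [n]$ using $F_0$ and new elements outside it (for instance $T = (F_0 \setminus \{i_k\}) \cup \{j\}$ for some $i_k \in F_0$ and $j \in [n] \setminus F_0$). The plan is to use the swap-neighbor identities and the $0/1$ structure of $v_{F_0}$ to force every one of the $2^{d+1}$ subsets of $T$ to arise as a trace $F \cap T$ with $F \in \mathcal{F}$, thus producing a shattered $(d+1)$-set and contradicting VC-dim $\leq d$.

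The main obstacle will be this final structural step. The linear identities on the $c_F$'s do not immediately pin down the combinatorics of $\mathcal{F}$, since generic kernel elements of $W$ are supported on sets of $(d+1)$-subsets that need not lie inside $\mathcal{F}$. One must exploit both the integrality and $0/1$ nature of the inclusion vectors (which constrains the signs and sizes of the $c_F$'s) and the extra slack afforded by $n \geq 2d+2$ over Wilson's threshold $n \geq 2d+1$. That this slack is genuinely necessary is witnessed by $n = 2d+1$, $\mathcal{F} = \binom{[n]}{d+1}$: here $|\mathcal{F}| = \binom{n}{d+1} = \binom{n}{d}$, and VC-dim $\leq d$ since every $(d+1)$-subset $S$ has $|[n] \setminus S| = d$, so the trace $\emptyset$ on $S$ is never realized.
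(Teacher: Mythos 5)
The proposal is not a complete proof: it lays out a plausible algebraic framework but, by your own admission, stops short of the decisive step. Let me separate what works from what does not.

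Your key assertion, that VC-dimension at most $d$ forces the inclusion vectors $\{v_F\}_{F\in\mathcal F}$ to be linearly independent in $\mathbb R^{\binom{[n]}{d}}$, is in fact true, but it is \emph{not} what the Frankl--Pach argument shows, and you give no proof of it. The Frankl--Pach argument (and the paper's warm-up) establishes linear independence of the polynomials $f_F(\boldsymbol x)=\prod_{j\in B_F}x_j\prod_{j\in F\setminus B_F}(x_j-1)-\prod_{j\in F}x_j$, whose degree-$d$ homogeneous parts are $-\sum_{j\in F\setminus B_F}\prod_{k\in F\setminus\{j\}}x_k$, which do not coincide with $v_F$ unless $B_F=\emptyset$. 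The fact you want can be proved directly: setting $\sigma(T)=\sum_{F\supseteq T,\,F\in\mathcal F}c_F$, the relation $\sum_{S\supseteq T,\,|S|=d}\sigma(S)=(d+1-|T|)\sigma(T)$ propagates $\sigma\equiv 0$ from $d$-sets down to all $T$ with $|T|\le d$, and then inclusion--exclusion on a fixed $F_0$ in the support of $c$ gives $\sum_{F:\,F\cap F_0=T}c_F=(-1)^{d+1-|T|}c_{F_0}\neq 0$ for every $T\subsetneq F_0$, so $F_0$ is shattered. You should include this argument rather than attribute the result to Frankl--Pach.

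The fatal gap is the final structural step. Run exactly the same Möbius computation with $\sum_F c_F v_F=v_{F_0}$ for $F_0\notin\mathcal F$: you get $\sigma(T)=\mathbbm 1[T\subseteq F_0]$ for all $|T|\le d$ and $\sigma(F_0)=0$, hence $\sum_{F:\,F\cap F_0=T}c_F=(-1)^{d-|T|}\neq 0$ for every \emph{proper} $T\subsetneq F_0$. So every proper subset of $F_0$ is a trace of $\mathcal F$ on $F_0$ — but $F_0$ itself can never be a trace, precisely because $F_0\notin\mathcal F$ and the family is $(d+1)$-uniform. Therefore $F_0$ is never shattered, and the direct shattering plan fails. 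You acknowledge this as the ``main obstacle,'' but the obstacle is not a matter of bookkeeping: the identities you derive genuinely do not exhibit a shattered $(d+1)$-set by themselves, and no mechanism is proposed for transferring the near-shattering of $F_0$ to a different $(d+1)$-set $T$. The paper takes a genuinely different route at exactly this point: it introduces an additional polynomial $y_{Y,Z}$, performs a determinant calculation to extract a \emph{uniqueness} statement (exactly one $F_i$ with $F_i\cap Y=Z=B_i$ for each $Y\notin\mathcal F$, $Z\subsetneq Y$), and then derives a contradiction not from shattering but from a shadow count via the Kruskal--Katona theorem applied to $\mathcal Y=\binom{[n]}{d+1}\setminus\mathcal F$. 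You would need an analogous quantitative (not just existential) conclusion and a counting mechanism to close the argument.
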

In particular the condition $n\ge 2d+2$ is optimal since when $n=2d+1$, the upper bound $\binom{n}{d}$ is achieved by $\mathcal{F} = \binom{[n]}{d + 1}$, which was remarked by Frankl and Pach~\cite{1984Franklpach}. Our proof combines a simple yet powerful polynomial method with clean structural analysis. We begin with a concise proof of~\cref{thm:FranklPach} in~\cref{section:WarmUp} as an introductory step, which we then refine to establish~\cref{thm:Main} in~\cref{section:Main}.\\

{\bf \noindent Notations.} We usually regard $[n]=\{1,2,\ldots,n\}$ as the ground set. For a subset $A\subseteq [n]$, we use $\binom{A}{k}$ to denote the family of all subsets of $A$ with size $k$ and $\binom{A}{\leq k}$ to denote the family of all subsets of $A$ with size at most $k$; we also use $\boldsymbol{v}_A \in \{0,1\}^n$ to denote the characteristic vector of $A$. For a set system $\mathcal{F}$, we denote the \emph{$k$-shadow} of $\mathcal{F}$ as $\partial_{k}{\mathcal{F}}:=\{T\in\binom{[n]}{k}:T\subseteq F\ \text{for\ some\ }F\in\mathcal{F}\}$. We write $\boldsymbol{x}=(x_1,x_2, \ldots, x_n)$ and use $V_{n,d}$ to denote the vector space spanned by squarefree monomials in $\mathbb{R}[\boldsymbol{x}]$ with degree at most $d$.

\section{Warm-up: Frankl-Pach upper bound via polynomial method}\label{section:WarmUp}
In this section, we provide an alternative proof of~\cref{thm:FranklPach} using the multi-linear polynomial method. The following triangular criterion is useful for proving that a sequence of polynomials is linearly independent. Since this result is standard, we omit the proof here.

\begin{lemma}\label{lem:triangular}
    Let $f_{1},f_{2},\ldots,f_{m} \in \mathbb{R}[\boldsymbol{x}]$. If $\boldsymbol{v}^{(1)},\boldsymbol{v}^{(2)},\ldots,\boldsymbol{v}^{(m)}$ are vectors in $\mathbb{R}^n$ such that $f_{i}(\boldsymbol{v}^{(i)})\neq 0$ for $1\le i\le m$ and $f_{i}(\boldsymbol{v}^{(j)})=0$ for $i>j$, then $f_{1},f_{2},\ldots,f_{m}$ are linearly independent.
\end{lemma}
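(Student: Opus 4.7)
The plan is to prove linear independence directly by induction, exploiting the ``upper triangular'' vanishing pattern of $f_i(\boldsymbol{v}^{(j)})$. Suppose we have a linear relation
\[
c_1 f_1 + c_2 f_2 + \cdots + c_m f_m = 0
\]
in $\mathbb{R}[\boldsymbol{x}]$, where $c_1,\ldots,c_m \in \mathbb{R}$. The goal is to show $c_1 = c_2 = \cdots = c_m = 0$.

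First, I would evaluate the relation at $\boldsymbol{v}^{(1)}$. By hypothesis, $f_i(\boldsymbol{v}^{(1)}) = 0$ for every $i > 1$, so only the first summand survives, yielding $c_1 f_1(\boldsymbol{v}^{(1)}) = 0$. Since $f_1(\boldsymbol{v}^{(1)}) \neq 0$, this forces $c_1 = 0$. Next I would evaluate the (now simplified) relation at $\boldsymbol{v}^{(2)}$: the terms with $i > 2$ vanish by hypothesis, and the $i = 1$ term is already zero, leaving $c_2 f_2(\boldsymbol{v}^{(2)}) = 0$, and hence $c_2 = 0$.

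Iterating this evaluation at $\boldsymbol{v}^{(j)}$ for $j = 3,4,\ldots,m$ gives $c_j = 0$ at each step, since all terms with $i > j$ vanish by the triangular condition and all terms with $i < j$ vanish by the inductive hypothesis. Formally, one would phrase this as an induction on $j$: assume $c_1 = \cdots = c_{j-1} = 0$, then evaluation at $\boldsymbol{v}^{(j)}$ reduces the relation to $c_j f_j(\boldsymbol{v}^{(j)}) = 0$, and since $f_j(\boldsymbol{v}^{(j)}) \neq 0$, we conclude $c_j = 0$.

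There is no real obstacle here; the argument is essentially a one-line Gaussian-elimination-style observation, and the only care needed is to articulate cleanly why the evaluation at $\boldsymbol{v}^{(j)}$ kills both the ``upper'' terms (via the hypothesis $f_i(\boldsymbol{v}^{(j)}) = 0$ for $i > j$) and the ``lower'' terms (via the inductive vanishing of $c_1,\ldots,c_{j-1}$), leaving a single nonzero multiple of $c_j$.
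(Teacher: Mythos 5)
Your proof is correct; this is the standard argument for the triangular criterion. The paper states \cref{lem:triangular} as a known fact without supplying a proof, and your induction---evaluating a putative dependence relation $\sum_i c_i f_i = 0$ successively at $\boldsymbol{v}^{(1)}, \boldsymbol{v}^{(2)}, \ldots$ to peel off $c_1 = 0$, then $c_2 = 0$, and so on---is exactly the canonical justification one would give.
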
 
 We then build the following relationship between the size of $\mathcal{F}$ and its $d$-shadow.
\begin{theorem}\label{prop:Shadow}
    Let $\mathcal{F}\subseteq\binom{[n]}{d+1}$ be a set system with VC-dimension at most $d$, then $|\mathcal{F}|\le |\partial_{d}\mathcal{F}|$.
\end{theorem}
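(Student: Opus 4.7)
The plan is to apply the triangular criterion (\cref{lem:triangular}) to a family of polynomials indexed by $\mathcal{F}$. For each $F\in\mathcal{F}$, the condition $\mathrm{VC}(\mathcal{F})\le d$ forces $F$ (of size $d+1$) not to be shattered, so there exists a subset $A_F\subseteq F$ with $A_F\ne F'\cap F$ for every $F'\in\mathcal{F}$. I would fix such an $A_F$ of maximum possible cardinality; since $F=F\cap F$ is always realized, $A_F\subsetneq F$ and hence $|A_F|\le d$. The natural polynomial to associate to $F$ is the monomial $f_F(\boldsymbol{x}):=\prod_{i\in A_F}x_i\in V_{n,d}$, whose evaluation at a characteristic vector satisfies $f_F(\boldsymbol{v}_T)=\mathbf{1}[A_F\subseteq T]$.

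Next I would select, for each $F\in\mathcal{F}$, a $d$-subset $T_F\in\binom{F}{d}$ containing $A_F$; any such $T_F$ lies in $\partial_d\mathcal{F}$ and satisfies $f_F(\boldsymbol{v}_{T_F})=1$. I would order $\mathcal{F}=\{F_1,\ldots,F_m\}$ so that $|A_{F_1}|\ge|A_{F_2}|\ge\cdots\ge|A_{F_m}|$; the triangular condition $f_{F_i}(\boldsymbol{v}_{T_{F_j}})=0$ for $j<i$ then reduces to the combinatorial statement $A_{F_i}\not\subseteq T_{F_j}$. In the ``generic'' case $|A_F|=d$ for every $F\in\mathcal{F}$, the choice $T_F=A_F$ is forced; the $A_F$'s are then pairwise distinct because the bound $|F\cap F'|\le d$ combined with the non-achievability of each $A_F$ rules out $A_F=A_{F'}$ for $F\ne F'$, and the triangular conditions hold at once. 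Applying \cref{lem:triangular} then yields linear independence of $\{f_F\}_{F\in\mathcal{F}}$, and since $\{\boldsymbol{v}_{T_F}:F\in\mathcal{F}\}$ consists of $|\mathcal{F}|$ distinct elements of $\{\boldsymbol{v}_T:T\in\partial_d\mathcal{F}\}$, one obtains $|\mathcal{F}|\le|\partial_d\mathcal{F}|$.

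The main obstacle lies in the case $|A_F|<d$, which occurs exactly when every $d$-subset of $F$ is realized as $F'\cap F$ for some $F'\in\mathcal{F}$. There are then $d+1-|A_F|$ admissible choices of $T_F$, and they must be selected compatibly across $\mathcal{F}$ to preserve triangularity. I would aim to make the selection greedily (or via a small inductive scheme), leveraging the maximality of $A_F$: because every strict superset of $A_F$ inside $F$ is realized, the resulting structural constraints should suffice to prevent any earlier $T_{F_j}$ from containing a later $A_{F_i}$. I expect this combinatorial verification—rather than the polynomial setup itself—to be the most delicate part of the argument, and once it is carried out the conclusion $|\mathcal{F}|\le|\partial_d\mathcal{F}|$ follows directly from \cref{lem:triangular}.
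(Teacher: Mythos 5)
Your setup is a genuinely different route from the paper's, and it has a real gap which you have flagged but not filled. The paper also uses \cref{lem:triangular}, but it evaluates on the characteristic vectors of $\mathcal{F}\cup\binom{[n]}{\le d-1}\cup\bigl(\binom{[n]}{d}\setminus\partial_d\mathcal{F}\bigr)$; the polynomial attached to $F_i$ is not the monomial $\prod_{j\in A_{F_i}}x_j$ but rather $f_{F_i}=\prod_{j\in B_i}x_j\prod_{j\in F_i\setminus B_i}(x_j-1)-\prod_{j\in F_i}x_j$, engineered so that $f_{F_i}(\boldsymbol{v}_{F_i})=-1$ while $f_{F_i}(\boldsymbol{v}_F)=0$ for every other $F\in\mathcal{F}$, and the shadow bound then drops out of counting dimensions in $V_{n,d}$. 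This sidesteps the need for any explicit map from $\mathcal{F}$ into $\partial_d\mathcal{F}$.

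Your plan would instead build an injection $F\mapsto T_F\in\partial_d\mathcal{F}$, and here the polynomial dressing is inessential: once the $T_F$'s are distinct, $|\mathcal{F}|\le|\partial_d\mathcal{F}|$ follows immediately, and the triangular criterion would only re-derive that distinctness. So what remains is a pure system-of-distinct-representatives claim with an added triangularity constraint, and the non-generic case $|A_F|<d$ is precisely where this is unproved. Whenever $|A_{F_j}|=d$, the choice $T_{F_j}=A_{F_j}$ is forced, and nothing you have argued prevents a strictly smaller $A_{F_i}$ (with $i>j$ in your size-ordering) from being contained in that forced $A_{F_j}$, which would break the triangular condition. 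The maximality of $A_F$ controls what happens \emph{inside} $F$---every proper superset of $A_F$ within $F$ is realized---but it gives no visible leverage on how $A_{F_i}$ sits relative to a different set's $T_{F_j}$. Without a worked-out argument here the proof is incomplete, and I do not see a quick repair.
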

We remark that \cref{prop:Shadow} is of independent interest. For instance, it extends the celebrated Katona's shadow theorem~\cite{1964Katona}, which asserts that any $(d+1)$-uniform intersecting family $\mathcal{F}$ satisfies $|\mathcal{F}| \leq |\partial_d \mathcal{F}|$; this follows from the fact that any such intersecting family has VC-dimension at most $d$. Moreover, since $|\partial_{d}\mathcal{F}|\le\binom{n}{d}$, we can see~\cref{prop:Shadow} implies~\cref{thm:FranklPach}.

\begin{proof}[Proof of~\cref{prop:Shadow}]
Let $\mathcal{F} = \{F_1, F_2, \ldots, F_m\} \subseteq \binom{[n]}{d+1}$ be a set system with VC-dimension at most $d$. We now consider the following set systems and their corresponding polynomials.

\begin{enumerate}
    \item Since the VC-dimension of $\mathcal{F}$ is at most $d$, for each set $F_{i}\in\mathcal{F}$, there exists some $B_{i}$ such that $F_{i}\cap F\neq B_{i}$ for all $F\in\mathcal{F}$. For each $F_{i}\in \mathcal{F}$, let $f_{F_{i}}(\boldsymbol{x})=\prod\limits_{j\in B_{i}}x_{j}\cdot\prod\limits_{j\in F_{i}\setminus B_{i}}(x_{j}-1)-\prod\limits_{j\in F_{i}}x_{j}$.
    \item Let $\mathcal{H}=\binom{[n]}{\le d-1}$. For each set $H\in\mathcal{H}$, define $h_{H}(\boldsymbol{x})=\sum\limits_{j\notin H}x_{j}\cdot\prod\limits_{j\in H}x_{j}+(|H|-d-1)\prod\limits_{j\in H}x_{j}$.
    
    \item Let $\mathcal{G}=\binom{[n]}{d}\setminus\partial_{d}\mathcal{F}$. For each set $G\in \mathcal{G}$, define $g_{G}(\boldsymbol{x})=\prod\limits_{j\in G}x_{j}$.
\end{enumerate}

\begin{claim}\label{claim:LID}
  The polynomials $\{f_{F_{i}}\}_{F_{i}\in\mathcal{F}}$, $\{h_{H}\}_{H\in\mathcal{H}}$, and  $\{g_{G}\}_{G\in\mathcal{G}}$ are linearly independent.
\end{claim}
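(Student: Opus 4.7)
The plan is to invoke the triangular criterion (\cref{lem:triangular}) with each polynomial evaluated at the characteristic vector of the set that indexes it, combined with a carefully chosen ordering. Concretely, I would list the polynomials in the order: first the $f_{F_i}$'s (in any order), then the $h_H$'s grouped by increasing $|H|$, and finally the $g_G$'s (in any order), assigning the vectors $\boldsymbol{v}_{F_i}$, $\boldsymbol{v}_H$, $\boldsymbol{v}_G$ respectively.

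The workhorse for the verification consists of three elementary evaluation identities on characteristic vectors:
\begin{align*}
f_{F_i}(\boldsymbol{v}_A) &= \mathbbm{1}[A \cap F_i = B_i] - \mathbbm{1}[F_i \subseteq A], \\
h_H(\boldsymbol{v}_A) &= (|A|-d-1)\,\mathbbm{1}[H \subseteq A], \\
g_G(\boldsymbol{v}_A) &= \mathbbm{1}[G \subseteq A].
\end{align*}
From these, the triangular condition reduces to three easy checks. At $\boldsymbol{v}_{F_i}$: every later $f_{F_j}$ vanishes because $F_i \cap F_j \neq B_j$ (by the defining property of $B_j$) and $F_j \not\subseteq F_i$ (both sets have size $d+1$); every $h_H$ vanishes because $|F_i| = d+1$; and every $g_G$ vanishes because $G \notin \partial_d \mathcal{F}$. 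At $\boldsymbol{v}_H$: a later $h_{H'}$ has $|H'| \geq |H|$ with $H' \neq H$, forcing $H' \not\subseteq H$; and $g_G$ vanishes because $|G| = d > |H|$. At $\boldsymbol{v}_G$: a later $g_{G'}$ satisfies $G' \not\subseteq G$ since $|G'| = |G|$ and $G' \neq G$. Finally, each polynomial is nonzero at its own vector: $f_{F_i}(\boldsymbol{v}_{F_i}) = -1$ (using $B_i \subsetneq F_i$), $h_H(\boldsymbol{v}_H) = |H|-d-1 \neq 0$, and $g_G(\boldsymbol{v}_G) = 1$.

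The main subtlety---and the one delicate choice I expect to make in the argument---is the relative ordering of the families $\mathcal{H}$ and $\mathcal{G}$. The identity $h_H(\boldsymbol{v}_G) = -\mathbbm{1}[H \subseteq G]$ is nonzero whenever $H \subseteq G$, so placing the $g_G$'s before the $h_H$'s would violate the triangular condition at $\boldsymbol{v}_G$ for later $h_H$'s. Placing the $g_G$'s at the very end removes this obstruction, as the triangular condition at $\boldsymbol{v}_G$ then only involves other $g_{G'}$'s. In the same spirit, ordering the $h_H$'s by increasing $|H|$ ensures that at $\boldsymbol{v}_H$, a later $h_{H'}$ involves a set $H'$ of size at least $|H|$ distinct from $H$, so $H' \not\subseteq H$. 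With both ordering choices in place, the triangular criterion applies and the linear independence of the combined family follows.
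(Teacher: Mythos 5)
Your proof is correct and follows essentially the same route as the paper: the same ordering $\mathcal{F}$, then $\mathcal{H}$ by increasing $|H|$, then $\mathcal{G}$, with the triangular criterion verified via evaluation on characteristic vectors. One small note: the first term in your formula for $f_{F_i}(\boldsymbol{v}_A)$ should carry a sign $(-1)^{|F_i\setminus B_i|}$, i.e.\ $f_{F_i}(\boldsymbol{v}_A) = (-1)^{|F_i\setminus B_i|}\mathbbm{1}[A\cap F_i = B_i] - \mathbbm{1}[F_i\subseteq A]$, but since the argument only uses whether the entries vanish, this does not affect anything.
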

\begin{poc}
We first order the sets linearly in the order: $\mathcal{F} < \mathcal{H} < \mathcal{G}$, and then put the elements of $\mathcal{F}$ and $\mathcal{G}$ in arbitrary order and the elements of $\mathcal{H}$ in non-decreasing order.
We shall show that these polynomials satisfy the triangular criterion in Lemma~\ref{lem:triangular} by evaluating them on the characteristic vectors $\boldsymbol{v}_{A}$ for $A \in \mathcal{F} \cup \mathcal{H} \cup \mathcal{G}$.
We consider the following six cases:
\begin{itemize}
    \item For distinct $F_{i},F_{j}\in\mathcal{F}$ with $i>j$, first, it is evident that $f_{F_{i}}(\boldsymbol{v}_{F_{i}})=-1$ since $B_{i}\neq F_{i}$. Moreover, since $F_{j}\cap F_{i}\neq B_{i}$, we can see $f_{F_{i}}(\boldsymbol{v}_{F_{j}})=0$.
    
 \item For any $F\in\mathcal{F}$ and $H\in\mathcal{H}$, we have $h_{H}(\boldsymbol{v}_{F})=0$ since $|F|=d+1$.
    
    \item For any $F\in\mathcal{F}$ and $G\in\mathcal{G}$, we have $g_{G}(\boldsymbol{v}_{F})=0$ since $G$ does not belong to $\partial_{d}\mathcal{F}$.
   
    \item For the sets in the family $\mathcal{H}$, we sort them by size in non-decreasing order, placing smaller sets before larger ones. That is, for distinct $H_i, H_j \in \mathcal{H}$, we have $|H_j| \le |H_i|$ if $j \le i$. First, it is easy to check that $h_{H_i}(\boldsymbol{v}_{H_i}) \neq 0$ since $0 \le |H_i| \le d-1$. Furthermore, we observe that when $i > j$, $h_{H_i}(\boldsymbol{v}_{H_j}) = 0$, because $|H_j| \le |H_i|$ implies that $H_i \not\subseteq H_j$.
    \item For any $H\in\mathcal{H}$ and $G\in\mathcal{G}$, obviously we have $g_{G}(\boldsymbol{v}_{H})=0$ since $|H|\le d-1<d$.
    \item For any distinct $G_{i},G_{j}\in\mathcal{G}$, we have $g_{G_{i}}(\boldsymbol{v}_{G_{i}})=1$ and $g_{G_{i}}(\boldsymbol{v}_{G_j})=0$ since $|G_{i}|=|G_{j}|$ implies that $G_{i}\not\subseteq G_{j}$.
\end{itemize}
This finishes the proof.
\end{poc}
By~\cref{claim:LID}, we have $|\mathcal{F}|+\sum\limits_{i=0}^{d-1}\binom{n}{i}+\big(\binom{n}{d}-|\partial_{d}\mathcal{F}|\big)\le\sum\limits_{i=0}^{d}\binom{n}{i}$ since these polynomials are in the vector space $V_{n,d}$, which has dimension $\sum\limits_{i=0}^{d}\binom{n}{i}$. This yields $|\mathcal{F}|\le|\partial_{d}\mathcal{F}|$, as desired.
\end{proof}

\section{Proof of~\cref{thm:Main}}\label{section:Main}
In this section, we continue using the same notation as in the proof of~\cref{prop:Shadow}, in particular the sets $B_{i}$ and the family $\mathcal{H}$.
 
To prove~\cref{thm:Main}, we assume for contradiction that $|\mathcal{F}|=\binom{n}{d}$, and thus $\left|\partial_{d} \mathcal{F}\right|=\binom{n}{d}$ as well by~\cref{prop:Shadow}. Our proof then consists of two parts, we first introduce an additional polynomial in $V_{n,d}$, hoping that this new polynomial is linearly independent from the polynomials $\{f_{F_i}\}_{F_{i}\in\mathcal{F}}$, $\{h_{H}\}_{H\in\mathcal{H}}$ and $\{g_{G}\}_{G\in\mathcal{G}}$ (under our assumption, $\mathcal{G}=\emptyset$) in the proof of~\cref{prop:Shadow}, thereby achieving an improvement. Otherwise, we can derive a desirable combinatorial property~(see~\cref{claim:ExactOne}) about this set system. However, based on an application of the classical Kruskal-Katona theorem, such a combinatorial property cannot hold.
\subsection{Adding an extra polynomial}
 For each set $Y\in \binom{[n]}{d+1} \setminus \mathcal{F}$, and for each proper subset $Z\subsetneq Y$, we define the following polynomial
\begin{equation*}
    y_{Y, Z}(\boldsymbol{x})=\prod\limits_{j\in Z}x_{j}\cdot\prod\limits_{j\in Y\setminus Z}(x_{j}-1) - \prod\limits_{j\in Y}x_{j}.
\end{equation*}
Observe that $y_{Y,Z}$ is multilinear and has degree at most $d$, thus $y_{Y,Z}$ lies in the vector space $V_{n,d}$.

\begin{claim}\label{claim:ExactOne}
    If $|\mathcal{F}| = \binom{n}{d}$, then for each $Y \in \binom{[n]}{d+1}\setminus\mathcal{F}$ and each $Z \subsetneq Y$, there is exactly one $i$ such that $F_i \cap Y = Z = B_i$.
\end{claim}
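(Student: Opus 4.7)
The plan is to exploit the tightness of~\cref{prop:Shadow} under the hypothesis $|\mathcal{F}| = \binom{n}{d}$. Combined with~\cref{prop:Shadow} itself, this forces $|\partial_d \mathcal{F}| = \binom{n}{d}$ and hence $\mathcal{G} = \emptyset$, so the linearly independent family from~\cref{claim:LID} already has exactly $\dim V_{n,d}$ elements. Consequently $\{f_{F_i}\}_{F_i \in \mathcal{F}} \cup \{h_H\}_{H \in \mathcal{H}}$ is a basis of $V_{n,d}$, and since $y_{Y,Z} \in V_{n,d}$ there is a unique expansion
\[
y_{Y,Z} \;=\; \sum_{F_i \in \mathcal{F}} \alpha_i\, f_{F_i} \;+\; \sum_{H \in \mathcal{H}} \beta_H\, h_H.
\]
I will extract the claim by evaluating this identity at two carefully chosen characteristic vectors.

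First, I evaluate at $\boldsymbol{v}_{F_j}$ for each $F_j \in \mathcal{F}$. Every $h_H$ vanishes at $\boldsymbol{v}_{F_j}$ because $\sum_{k=1}^n (\boldsymbol{v}_{F_j})_k - d - 1 = 0$, and the arguments in~\cref{claim:LID} give $f_{F_i}(\boldsymbol{v}_{F_j}) = -\delta_{ij}$ (using the defining property $F_j \cap F_i \neq B_i$ for $i \neq j$). On the left-hand side, the term $\prod_{k \in Y} x_k$ in $y_{Y,Z}$ evaluates to $0$ because $Y \neq F_j$, leaving
\[
y_{Y,Z}(\boldsymbol{v}_{F_j}) \;=\; (-1)^{d+1-|Z|}\,\mathbf{1}[F_j \cap Y = Z].
\]
Matching coefficients yields $\alpha_j = (-1)^{d-|Z|}\,\mathbf{1}[F_j \cap Y = Z]$, so $\alpha_j$ is nonzero precisely for those $j$ with $F_j \cap Y = Z$.

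Next, I evaluate at $\boldsymbol{v}_Y$. Since $Z \subsetneq Y$ forces the first product of $y_{Y,Z}$ to vanish while $\prod_{k \in Y} x_k$ evaluates to $1$, we obtain $y_{Y,Z}(\boldsymbol{v}_Y) = -1$; similarly $h_H(\boldsymbol{v}_Y) = 0$ since $|Y| = d+1$, and an analogous computation gives $f_{F_i}(\boldsymbol{v}_Y) = (-1)^{d+1-|B_i|}\,\mathbf{1}[F_i \cap Y = B_i]$ (using $Y \neq F_i$). Substituting the values of $\alpha_i$ obtained above, a summand in $\sum_i \alpha_i f_{F_i}(\boldsymbol{v}_Y)$ survives exactly when $F_i \cap Y = Z$ and $F_i \cap Y = B_i$ both hold, i.e., when $F_i \cap Y = Z = B_i$; for such an $i$ we have $|B_i| = |Z|$, so the signs collapse to $(-1)^{d-|Z|}(-1)^{d+1-|Z|} = -1$. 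Therefore
\[
-1 \;=\; -\bigl|\{\,i : F_i \cap Y = Z = B_i\,\}\bigr|,
\]
which is exactly the claim.

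The only delicate point is securing the basis property so that the coefficients $\alpha_j$ are uniquely determined by the identity; once that is in hand, the rest is careful sign bookkeeping to make the two evaluations conspire to yield an integer count of exactly one.
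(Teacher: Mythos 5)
Your proof is correct, and it is essentially the paper's argument recast in a cleaner, direct form. The paper proceeds by contradiction: assuming the count $m_0\neq 1$ for some pair $(Y,Z)$, it augments $\{f_{F_i}\}\cup\{h_H\}$ with $y_{Y,Z}$, writes the evaluation matrix $\boldsymbol D$ on the characteristic vectors, and shows via a Schur-complement row reduction that $\boldsymbol D$ has determinant governed by $-1+\TT^\top\RR = -1+m_0$, so $m_0\neq 1$ forces linear independence and violates the dimension bound. You instead observe that $|\mathcal{F}|=\binom{n}{d}$ together with \cref{prop:Shadow} forces $\partial_d\mathcal{F}=\binom{[n]}{d}$, hence $\mathcal{G}=\emptyset$, hence $\{f_{F_i}\}\cup\{h_H\}$ is already a basis of $V_{n,d}$; you then expand $y_{Y,Z}$ in this basis, read off the coefficients $\alpha_j$ by evaluating at $\boldsymbol{v}_{F_j}$, and evaluate at $\boldsymbol{v}_Y$ to land directly on $-1=-m_0$. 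The underlying computation is the identical pairing $\sum_i T_iR_i$, so this is the same mechanism; what you gain is that the explicit basis observation replaces the matrix-invertibility argument and makes it clear the conclusion is an equality $m_0=1$ rather than merely the exclusion $m_0\neq 1$ leading to contradiction. One small thing worth saying explicitly in your write-up: you use $f_{F_i}(\boldsymbol{v}_{F_j})=0$ for \emph{all} $i\neq j$, not just $i>j$ as stated in \cref{claim:LID}; this is fine because the defining property $F\cap F_i\neq B_i$ for all $F\in\mathcal{F}$ is not order-dependent, but it deserves a word since the triangular criterion as quoted only records the one-sided vanishing.
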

\begin{poc}
    Suppose that there exists some pair $(Y,Z)$ such that the number of $F_{i}\in\mathcal{F}$ with $F_{i}\cap Y=Z=B_{i}$ is $m_{0}\neq 1$. We then claim that $y_{Y, Z}$, $\{f_{F_{i}}\}_{F_{i}\in\mathcal{F}}$ and $\{h_{H}\}_{H\in\mathcal{H}}$ are linearly independent, which implies that $|\mathcal{F}|\le\binom{n}{d}-1$, a contradiction. To see this, note that $\mathcal{G}=\emptyset$, and suppose there exist coefficients (that are not all zero) $\alpha$, $\boldsymbol{\beta}=(\beta_{1},\beta_{2},\ldots,\beta_{m})$, and $\boldsymbol{\gamma}=(\gamma_{1},\gamma_{2},\ldots,\gamma_{w})$, where $w=|\mathcal{H}|=\sum\limits_{i=0}^{d-1}\binom{n}{i}$, such that
    \begin{equation*}
        \alpha\cdot y_{Y,Z}+ \sum\limits_{i\in [m]}\beta_{i}f_{F_{i}}+\sum\limits_{j\in [w]}\gamma_{j}h_{H_{j}}=0.
    \end{equation*}
    Note that $y_{Y,Z}(\boldsymbol{v}_{Y})=-1$ and $y_{Y,Z}(\boldsymbol{v}_{F_{i}})$ equals $(-1)^{d+1-|Z|}$ if $Y\cap F_{i}=Z$ and equals $0$ otherwise. $f_{F_{i}}(\boldsymbol{v}_{Y})$ equals $(-1)^{d+1-|B_{i}|}$ if $Y\cap F_{i}=B_{i}$ and equals $0$ otherwise. Mirroring the previous argument in the proof of~\cref{claim:LID}, we can obtain the following matrix equality by evaluating the left-hand side at the indicator vectors $\boldsymbol{v}_{A}$ for $A \in \{Y\} \cup \mathcal{F} \cup \mathcal{H}$:
    \begin{equation*}
\left(\begin{matrix}
-1 &\TT^\top & \boldsymbol{0} \\
\RR & -\boldsymbol{I}_{m} & \boldsymbol{0} \\
 * & * & \boldsymbol{A}
\end{matrix}\right)
\begin{bmatrix} \alpha \\ \boldsymbol{\beta} \\  \boldsymbol{\gamma} \\ \end{bmatrix} 
= \boldsymbol{0},
\end{equation*}
where the vectors $\TT=(T_{1},\ldots,T_{m})^\top$ and $\RR=(R_{1},\ldots,R_{m})^\top$ are given by
$$T_i = f_{F_i}(\vv_{Y}) = \begin{cases}
    (-1)^{d + 1 - \abs{B_i}}, &\ \textup{if}\ Y \cap F_i = B_i  \\
    0, &\ \text{otherwise}
\end{cases},$$
$$R_i = y_{Y, Z}(\vv_{F_i}) = \begin{cases}
    (-1)^{d + 1 - \abs{Z}},&\ \textup{if}\ Y \cap F_i = Z \\
    0, &\ \text{otherwise}
\end{cases},$$
$\boldsymbol{I}_m$ is the $m \times m$ identity matrix, $\boldsymbol{A}$ is a $w\times w$ lower triangular matrix with nonzero diagonal entries, and we use $*$ to denote the sub-matrices whose exact values are irrelevant to our analysis. 

Denote the coefficient matrix by \(\boldsymbol{D}\). The matrix \(\boldsymbol{D}\) must be singular; otherwise, the system would have only the trivial solution for \((\alpha, \boldsymbol{\beta}, \boldsymbol{\gamma})\). On the other hand, using the elementary row operations, we can transform $\boldsymbol{D}$ into the matrix
\begin{equation*}
\left(\begin{matrix}
-1 + \TT^\top \RR & \boldsymbol{0} & \boldsymbol{0} \\
\RR & -\boldsymbol{I}_{m} & \boldsymbol{0} \\
 * & * & \boldsymbol{A}
\end{matrix}\right).
\end{equation*}
We can directly compute that
$$\TT^\top \RR = \sum_{i = 1}^{m} T_i R_i= m_0.$$
As $m_0 \neq 1$, the transformed matrix is lower triangular with nonzero diagonal entries, yielding that the coefficient matrix $\boldsymbol{D}$ is also invertible, a contradiction. This finishes the proof.
\end{poc}

\newcommand{\cF}{\mathcal{F}}
\newcommand{\cG}{\mathcal{G}}
\subsection{Impossibility via the Kruskal-Katona theorem}
We now show that the property given in~\cref{claim:ExactOne} can never be satisfied when $n \geq 2d + 2$. We shall make use of the Kruskal-Katona theorem~\cite{1966Katona,1963Kruskal}. More precisely, we take advantage of the following useful form due to Lov\'{a}sz~\cite [Exercise. 13.31(b)]{1979KKLovasz}. When $\alpha \in \mathbb{R}^+$, let $\binom{\alpha}{k}:=\frac{\alpha(\alpha-1)\cdots(\alpha-k+1)}{(k)!}$.
\begin{theorem}[Kruskal-Katona theorem]\label{thm:KK}
    For any $\alpha \in \mathbb{R}^+$, if $\cG$ is a $(d + 1)$-uniform set family with $\abs{\cG} = \binom{\alpha}{d + 1}$, then 
    $$\abs{\partial_{d}\cG} \geq \binom{\alpha}{d} = \abs{\cG} \cdot \frac{d + 1}{\alpha - d}.$$
\end{theorem}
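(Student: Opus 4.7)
The plan is to follow the classical shifting-and-induction approach, which yields exactly this real-parameter refinement due to Lov\'asz. For each pair $1 \le i < j \le n$, define the shifting operator $S_{ij}$ that replaces every $G \in \cG$ satisfying $j \in G$, $i \notin G$, and $(G \setminus \{j\}) \cup \{i\} \notin \cG$ by $(G \setminus \{j\}) \cup \{i\}$. A routine check shows $|S_{ij}\cG| = |\cG|$ and $|\partial_d S_{ij}\cG| \le |\partial_d \cG|$ (the latter via an explicit injection from shadow sets newly created by $S_{ij}$ into shadow sets destroyed by it). Iterating all such $S_{ij}$ to fixity, we may assume $\cG$ is \emph{shifted}: whenever $G \in \cG$ and $i < j$ with $j \in G$, $i \notin G$, we also have $(G \setminus \{j\}) \cup \{i\} \in \cG$.

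For such a shifted $\cG \subseteq \binom{[n]}{d+1}$, I would split according to the top element $n$ of the ground set,
\[
\cG_0 = \{G \in \cG : n \notin G\} \subseteq \binom{[n-1]}{d+1}, \qquad \cG_1 = \{G \setminus \{n\} : G \in \cG,\ n \in G\} \subseteq \binom{[n-1]}{d}.
\]
The shifting hypothesis forces the key inclusion $\cG_1 \subseteq \partial_d \cG_0$: for any $F \in \cG_1$ and $i \in [n-1] \setminus F$, shifting $n \mapsto i$ in $F \cup \{n\} \in \cG$ yields $F \cup \{i\} \in \cG_0$ containing $F$. Classifying $\partial_d \cG$ by whether its members contain $n$ then gives the structural identity $|\partial_d \cG| = |\partial_d \cG_0| + |\partial_{d-1}\cG_1|$. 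Writing $|\cG_0| = \binom{\alpha_0}{d+1}$ and $|\cG_1| = \binom{\alpha_1}{d}$ for suitable $\alpha_0 \geq d+1$, $\alpha_1 \geq d$ (well-defined by continuity and monotonicity of $x \mapsto \binom{x}{k}$ on $[k,\infty)$), induction on $n$ delivers $|\partial_d \cG_0| \geq \binom{\alpha_0}{d}$ and $|\partial_{d-1}\cG_1| \geq \binom{\alpha_1}{d-1}$, reducing the theorem to a purely analytic inequality on generalized binomial coefficients.

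The main obstacle is precisely that inequality: given $\binom{\alpha_0}{d+1} + \binom{\alpha_1}{d} = \binom{\alpha}{d+1}$ with $\alpha_0 \geq d+1$ and $\alpha_1 \geq d$, show $\binom{\alpha_0}{d} + \binom{\alpha_1}{d-1} \geq \binom{\alpha}{d}$. I would treat this as a one-parameter optimization: fix $\alpha$, let $\alpha_0$ vary, and determine $\alpha_1$ implicitly by the constraint. Using the logarithmic-derivative identity $\tfrac{d}{dx}\binom{x}{k} = \binom{x}{k}\sum_{i=0}^{k-1}\tfrac{1}{x-i}$, a direct computation shows the minimum of the left-hand side occurs at $\alpha_0 = \alpha_1 = \alpha - 1$, where equality holds via Pascal's identity $\binom{\alpha-1}{d} + \binom{\alpha-1}{d-1} = \binom{\alpha}{d}$. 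Base cases ($d = 0$, or $\alpha \le d+1$ where the bound becomes trivial) are handled directly, closing the induction.
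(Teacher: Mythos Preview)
The paper does not prove \cref{thm:KK}; it is quoted as the classical Lov\'asz form of the Kruskal--Katona theorem and cited without argument. So there is no in-paper proof to compare against. Your overall plan (shift, split off the top element, induct) is the standard one and is correct up to the final step.

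That final step, however, contains a genuine gap. The analytic inequality you isolate --- that $\binom{\alpha_0}{d+1}+\binom{\alpha_1}{d}=\binom{\alpha}{d+1}$ together with only $\alpha_0\ge d+1$ and $\alpha_1\ge d$ forces $\binom{\alpha_0}{d}+\binom{\alpha_1}{d-1}\ge\binom{\alpha}{d}$ --- is \emph{false}. Take $d=2$, $\alpha=5$, $\alpha_0=3$; the constraint gives $\binom{\alpha_1}{2}=9$, so $\alpha_1=(1+\sqrt{73})/2\approx 4.77$, and then
\[
\binom{3}{2}+\binom{\alpha_1}{1}\approx 3+4.77=7.77<10=\binom{5}{2}.
\]
Moreover, $\alpha_0=\alpha_1=\alpha-1$ is not even a critical point of your objective: the Lagrange condition $\binom{y}{d}'/\binom{y}{d+1}'=\binom{z}{d-1}'/\binom{z}{d}'$ is not an identity at $y=z$, as one checks directly for $d=2$, $y=z=4$. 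So the ``direct computation shows the minimum occurs at $\alpha_0=\alpha_1=\alpha-1$'' claim cannot be right.

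What went wrong is that you used the inclusion $\cG_1\subseteq\partial_d\cG_0$ only to derive the identity $|\partial_d\cG|=|\partial_d\cG_0|+|\partial_{d-1}\cG_1|$ and then discarded it when passing to the optimization. In the counterexample above $\alpha_1>\alpha_0$, which is precisely the configuration that the inclusion rules out combinatorially but your analytic formulation still permits. The standard repair is to keep the bound $|\partial_d\cG_0|\ge|\cG_1|$ in play and split into cases according to whether $|\cG_1|\ge\binom{\alpha-1}{d}$: in the first case one combines $|\partial_d\cG_0|\ge|\cG_1|\ge\binom{\alpha-1}{d}$ with $|\partial_{d-1}\cG_1|\ge\binom{\alpha-1}{d-1}$ and Pascal; in the second case $|\cG_0|>\binom{\alpha-1}{d+1}$ and one recurses into $\cG_0$ on the smaller ground set. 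Either way, the inclusion $\cG_1\subseteq\partial_d\cG_0$ has to be used as an inequality, not merely to justify the splitting identity.
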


More precisely, we can show the following result, which implies~\cref{thm:Main}.
\begin{prop}\label{Prop:Impossible}
    Let $n \geq 2d + 2$. Then there is no $\cF = \{F_1, \ldots, F_m\} \subseteq \binom{[n]}{d + 1}$ together with $B_i \subseteq F_i$ and $\abs{\cF} = \binom{n}{d}$ satisfying the following properties simultaneously:
    \begin{enumerate}
        \item[\textup{(1)}] For each $Y \in \binom{[n]}{d + 1} \setminus \cF$ and each $Z \subsetneq Y$, there exists exactly one $F_i \in \cF$ with $F_i \cap Y = Z = B_i$.
        \item[\textup{(2)}] $F_i \cap F_j \neq B_i$ for any $1\leq i,j\leq m$.
    \end{enumerate}
\end{prop}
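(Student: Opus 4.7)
The plan is to work with the complement family $\cG := \binom{[n]}{d+1}\setminus\cF$, which has size $|\cG|=\binom{n}{d+1}-\binom{n}{d}$, and derive a contradiction by producing incompatible upper and lower bounds on $|\partial_d\cG|$: an upper bound coming from properties (1) and (2), and a lower bound coming from the Kruskal--Katona theorem.

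For the upper bound, let $N_d=|\{i:|B_i|=d\}|$. I first claim $\partial_d\cG\subseteq\{B_i:|B_i|=d\}$: given any $B\in\partial_d\cG$, pick $Y\in\cG$ with $B\subsetneq Y$ and apply property (1) to the pair $(Y,B)$ to obtain the $F_i$ with $B_i=B$. This already yields $|\partial_d\cG|\leq N_d$. I then evaluate $N_d$ by double-counting the pairs $(F_i,Y)$ with $|B_i|=d$, $Y\in\cG$, and $F_i\cap Y=B_i$. Summing over $Y\in\cG$ via property (1), each $Y$ contributes exactly $d+1$ such pairs (one per $d$-subset of $Y$). Summing over $F_i$ with $|B_i|=d$, every $y\in[n]\setminus F_i$ produces a valid pair via $Y=B_i\cup\{y\}$, and property (2) forces $Y\notin\cF$ (otherwise $Y=F_j$ gives $F_j\cap F_i=B_i$), so $Y\in\cG$ automatically; hence each such $F_i$ contributes exactly $n-d-1$ pairs. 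Therefore $N_d(n-d-1)=(d+1)|\cG|$, so
\[
|\partial_d\cG|\leq N_d=\frac{(d+1)|\cG|}{n-d-1}.
\]

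For the lower bound, let $\alpha\in\mathbb{R}^+$ be determined by $\binom{\alpha}{d+1}=|\cG|$; then \cref{thm:KK} yields $|\partial_d\cG|\geq (d+1)|\cG|/(\alpha-d)$. Comparing with the upper bound forces $\alpha-d\geq n-d-1$, that is, $\alpha\geq n-1$. But an elementary computation gives
\[
\binom{n-1}{d+1}-|\cG|=\binom{n-1}{d+1}-\binom{n}{d+1}+\binom{n}{d}=\binom{n}{d}-\binom{n-1}{d}=\binom{n-1}{d-1}>0,
\]
so $|\cG|<\binom{n-1}{d+1}$, forcing $\alpha<n-1$ and giving the desired contradiction.

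The main subtlety lies in the upper bound: we crucially need property (2) to guarantee that every candidate witness $Y=B_i\cup\{y\}$ with $y\in[n]\setminus F_i$ actually lies in $\cG$, so that the $F_i$-side of the double-count gives \emph{exactly} $n-d-1$ rather than merely an upper bound. Everything else is direct arithmetic and a single invocation of Kruskal--Katona.
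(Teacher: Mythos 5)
Your proof is correct and follows essentially the same route as the paper: pass to the complement family $\cG = \binom{[n]}{d+1}\setminus\cF$, use property (1) together with property (2) in a double count to show $|\partial_d\cG| \le \frac{(d+1)|\cG|}{n-d-1}$, and contradict this with the Kruskal--Katona lower bound. The only (cosmetic) difference is that you work with the one-sided inequality $|\partial_d\cG| \le N_d$ rather than establishing, as the paper does, that $\partial_d\cG$ is exactly the set of size-$d$ tags $B_i$ and that these tags are pairwise distinct; either version closes the argument.
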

\begin{proof}[Proof of~\cref{Prop:Impossible}]

We consider the set system $\mathcal{Y} = \binom{[n]}{d + 1} \setminus \cF$ with $\abs{\mathcal{Y}} = \binom{n}{d + 1} - \binom{n}{d}$. {Since $n\geq 2d+2$, $\mathcal{Y}$ is nonempty.} 
By property (1), for each $Y \in\mathcal{Y}$ and each $Z \subseteq Y$ of size $d$, there is exactly one $i\in [m]$ such that $F_i \cap Y=Z = B_i$. {This shows that $\partial_{d}\mathcal{Y} \subseteq \{B_i: \abs{B_i} = d\}.$} On the other hand, for each $B_{i}$ of size $d$, there are $n-d-1$ many $Y\in\mathcal{Y}$ containing $B_{i}$ {and in particular $B_i \in \partial_{d} \mathcal{Y}$. Indeed, if $|B_i|=d$, then $B_i$ is a subset of $n-d$ sets in $\binom{[n]}{d+1}$, and among the sets in $\mathcal{F}$, only $F_i$ contains $B_i$: if $B_i \subseteq F_j$ for some $j\neq i$, then we would have $F_i \cap F_j=B_i$.}

Thus, we have shown that
$$\partial_{d}\mathcal{Y} = \{B_i: \abs{B_i} = d\}.$$ Observe that if $i \neq j$ and $\abs{B_i}=\abs{B_j}=d$, then $B_i\neq B_j$, for otherwise $F_i \cap F_j=B_i$, violating the property (2) of $\cF$. This shows that $\abs{\partial_{d}\mathcal{Y}}=|\{1\leq i \leq m: \abs{B_i}=d\}|$.

 Double counting the number of pairs $(Y, i)$ with $Y \cap F_i = B_i$ and $\abs{B_i} = d$, we can obtain
$$\abs{\partial_{d}\mathcal{Y}} \cdot (n - d - 1) = (d + 1) \cdot \abs{\mathcal{Y}},$$
which yields that
$$\abs{\partial_{d}\mathcal{Y}} = \frac{d + 1}{n - d - 1}\cdot \abs{\mathcal{Y}}.$$
Let $\alpha\in\mathbb{R}^{+}$ be the positive real number satisfying $\abs{\mathcal{Y}} = \binom{\alpha}{d + 1}$. \cref{thm:KK} then implies that
$$\frac{d + 1}{n - d - 1} = \frac{\abs{\partial_{d}\mathcal{Y}}}{\abs{\mathcal{Y}}} \geq \frac{d + 1}{\alpha - d},$$
which gives $\alpha \geq n - 1$. Therefore we have
$$\binom{n}{d + 1} - \binom{n}{d} = \abs{\mathcal{Y}} \geq \binom{n - 1}{d + 1}$$
which is impossible by direct computation, since
$$\binom{n}{d + 1} - \binom{n}{d} - \binom{n - 1}{d + 1} = \binom{n - 1}{d} - \binom{n}{d} < 0.$$
This finishes the proof.
\end{proof}
\section{Concluding remarks}
In this paper, we focus on a fundamental yet unresolved question: what is the maximum size of a $(d+1)$-uniform set system if its VC-dimension is bounded above by $d$? The main contribution of this work is to show that the longstanding Frankl-Pach upper bound $\binom{n}{d}$ is not tight for any uniformity. Meanwhile, we provide a simple polynomial method that establishes a direct connection between the set system and its shadow, which generalizes Katona's shadow theorem~\cite{1964Katona}. 

In general, Frankl and Pach~\cite{1984Franklpach} showed that for any $n\ge r\ge d+1$, the size of any $r$-uniform set system on $[n]$ with VC-dimension at most $d$ can be upper bounded by $\binom{n}{d}$. It would be interesting to further improve this upper bound. 

\section*{Acknowledgment}
The authors would like to express our gratitude to the anonymous reviewers for the detailed and constructive comments which are very helpful to the improvement of the technical presentation of this paper. 
    
\bibliographystyle{abbrv}
\bibliography{FranklPach}
\end{document}